 \theoremstyle{definition}
 \newtheorem{defn}{Definition}
 \theoremstyle{remark}
 \newtheorem{rem}{Remark}
 \newtheorem{thrm}{Theorem}
\newtheorem{lemma}{Lemma}
\begin{document}
\title[A sufficient condition for a polyhedron to be rigid]{A sufficient condition for a polyhedron\\ to be rigid}
\author{Victor Alexandrov}
\address{Sobolev Institute of Mathematics, Koptyug ave., 4, 
Novosibirsk, 630090, Russia and Department of Physics, 
Novosibirsk State University, Pirogov str., 2, Novosibirsk, 
630090, Russia}
\email{alex@math.nsc.ru}
\address{{}\hfill{December 26, 2018}}
\begin{abstract}
We study oriented connected closed polyhedral surfaces with non-degenerate triangular faces 
in three-dimensional Euclidean space, calling them polyhedra for short.
A polyhedron is called flexible if its spatial shape can be changed continuously by 
changing its dihedral angles only.
We prove that the polyhedron is not flexible if for each of its edges the following holds true:
the length of this edge is not a linear combination with rational coefficients of the 
lengths of the remaining edges.
We prove also that if a polyhedron is flexible, then some linear combinations of its dihedral 
angles remain constant during the flex.
In this case, the coefficients of such a linear combination do not alter during the flex, 
are integers, and do not equal to zero simultaneously.
\par
\textit{Keywords}: flexible polyhedron, dihedral angle, Dehn invariant, Hamel basis, Bricard octahedron.
\par
\textit{Mathematics subject classification (2010)}:  Primary 52C25, Secondary 52B70, 51M20.
\end{abstract}
\maketitle

\section{Introduction}\label{s1}

We study oriented connected closed polyhedral surfaces with non-degenerate triangular faces in  $\mathbb{R}^3$.
For brevity, we call them \textit{polyhedra}, as is customary in the theory of flexible polyhedra.
A polyhedron is called \textit{flexible}, if its spatial form can be changed continuously by 
changing its dihedral angles only, i.\,e., due to such a continuous (in particular, continuous with 
respect to a parameter describing the process of the flex) deformation of the polyhedron, in which each 
of its faces remains congruent to itself in the process of deformation.
A continuous family of such deformations is called the \textit{flex} of the polyhedron.

We know quite a lot about flexible polyhedra, namely:

(a) they do exist (see \cite{Br97}, \cite{Co77}; see also
\cite{Le67}, \cite{Ku79}, \cite{Co80};
moreover, they can be of any genius and can even be non-orientable, see \cite{Sh15};

(b) during the flex, they necessarily keep unaltered the so-called total mean curvature, see \cite{Al85};

(c) during the flex, they necessarily keep unaltered the volume of the domain they bound (see 
\cite{Sa95}, \cite{Sa96}, \cite{Sa98}; 
see also \cite{CSW97}, \cite{Sc04});

(d) during the flex, they necessarily keep unaltered the Dehn invariants, see \cite{GI18};

(e) the notion of a flexible polyhedron can be introduced in all spaces of constant curvature 
of dimension 3 and above (see 
\cite{St00}, \cite{St06}, \cite{Ga14c}, \cite{Ga15a}, \cite{Ga16}, 
as well as in pseudo-Euclidean spaces of dimension 3 and above
(see \cite{Al03}); moreover, it is known that in many of these spaces flexible polyhedra do exist 
and possess properties similar to properties (a)--(d) (see 
\cite{Ku79}, \cite{Ga14a}, \cite{Ga14b}, \cite{Ga15b}, \cite{Ga17}).

However, still very little is known about one of the central problems of the theory of flexible 
polyhedra: how do we know if a given polyhedron is flexible or not?
Roughly, we can say that to answer this problem we have only the classical 
results that a closed convex polyhedron is necessarily nonflexible (i.\,e., rigid)  and that a 
polyhedron that does not allow nontrivial infinitesimal deformations is also rigid. 
A large number of publications is devoted to these results, of which we will indicate only a few, 
in our opinion, the most significant ones, namely  
\cite{Al05}, \cite{AR79}, \cite{De16}, \cite{Ho89}, \cite{St68}, \cite{Po73}, \cite{Sa04}.
Among recent results in this direction, we mention the following one by I.~Kh.~Sabitov \cite{Sa02}: 
given a polyhedron, he builds a certain set of polynomials and proves that if each of these polynomials 
has at least one non-zero coefficient, then the polyhedron is rigid.

In this article, we give a fundamentally new sufficient condition for a polyhedron to be rigid.
Namely, we prove that the condition ``the length of each edge is not a $\mathbb {Q}$-linear combination 
of the lengths of the remaining edges,'' implies that the polyhedron is rigid, see Theorem \ref{thrm2} 
in Section \ref{s3}.

In addition, we prove that if a polyhedron is flexible, some linear combinations of its dihedral angles 
remain constant during the flex, see Theorem \ref{thrm3} in Section \ref{s3}.
In this case, the coefficients of such a linear combination do not change during the flex, are integers, 
and not all are equal to zero.

Our arguments essentially use one auxiliary statement, proved by A.~A.~Gaifullin and L.~S.~Ignashchenko in 
\cite{GI18}, see Lemma \ref{l1} in Section \ref{s2}.

\section{Definitions, notation, and an auxiliary statement}\label{s2}

\begin{defn}\label{def1}
A non-empty finite set $K$ of simplices in Euclidean space is called a \textit{finite strongly 
connected two-dimensional simplicial complex} if the following conditions (i)--(v) are satisfied:

(i) each face of a simplex in $K$ is itself a simplex in $K$;

(ii) the intersection of any two simplices of $K$ is a face for each of these simplices;

(iii) each simplex of $K$ is contained in a 2-dimensional simplex of $K$;

(iv) every 1-dimensional simplex of the complex $K$ is contained in exactly two 2-dimensional simplices 
of $K$;

(v) for any two 2-dimensional simplices $\sigma, \tau\in K$ there is a sequence of 2-dimensional simplices 
$\sigma=\sigma_1, \sigma_2, \dots, \sigma_n =\tau$ of the complex $K$ such that the intersection of 
$\sigma_{j} \cap \sigma_{j+1}$ is a 1-dimensional simplex of $K$ for all $j=1,\dots,n-1$.

The set of all $i$-dimensional simplices of the complex $K$ is denoted by $K_i$, $i=0,1,2$.

A complex $K$ is called \textit{oriented} if each of its 2-dimensional simplices is equipped with an 
orientation and these orientations are consistent with each other. 
The latter means that if $\sigma\in K_1$ is contained both in $\tau_1\in K_2$ and $\tau_2\in K_2$, 
then $\tau_1$ and $\tau_2$ generate opposite orientations on $\sigma$.
\end{defn}

\begin{rem}\label{rem1}
The conditions (i) and (ii) mean that $K$ is a simplicial complex;
the condition (iii), in particular, means that $K$ is a two-dimensional complex;
the condition (iv) means that $K$ has no neither boundary nor branch points;
the condition (v) means that $K$ is strongly connected.
\end{rem}

\begin{defn}\label{def2}
Let $K$ be a finite strongly connected two-dimensional simplicial complex.
A mapping $P:K\to\mathbb{R}^3$, whose restriction to each simplex of $K$ is affine, is called a
\textit{polyhedron} of the combinatorial type $K$.
For brevity, the image $P(K)$ of $K$ under the action of $P$ is sometimes called a polyhedron too.
If $P:K\to\mathbb{R}^3$ and $\sigma\in K_i$ then $P(\sigma)$ is called a \textit{vertex} of $P$, if $i=0$; 
is called an \textit{edge} of $P$, if $i=1$; and is called a \textit{face} of $P$, if $i=2$.
We say that a polyhedron $P:K\to\mathbb{R}^3$ has \textit{nondegenerate faces} (or, equivalently, 
\textit {nondegenerate triangular faces}), if, for every $\sigma\in K_2$, the face $P(\sigma)$ 
is a nondegenerate 2-dimensional simplex in $\mathbb{R}^3$. 
If $K$ is an oriented complex, then a polytope $P:K\to\mathbb{R}^3$ is called \textit{oriented}.
\end{defn}

\begin{rem}\label{rem2}
A polyhedron $P:K\to\mathbb{R}^3$ may have self-intersections, i.\,e., in Definition \ref{def2}, it may occur 
that there are simplices $\sigma,\tau\in K$ such that $\sigma\cap\tau=\varnothing$ and
$P(\sigma)\cap P(\tau)\neq \varnothing$.
\end{rem}

\begin{defn}\label{def3}
A continuous family $\{P_t\}_{t\in[0,1]}$ of polyhedra $P_t:K\to\mathbb{R}^3$ is called a
\textit{flex} of a polyhedron $P:K\to\mathbb{R}^3$ if $P=P_0$ and, for every $\sigma\in K_2$ and 
$t\in (0,1]$, the face $P_t(\sigma)$ of the polyhedron $P_t$ is congruent to the face $P_0(\sigma)$
of the polyhedron $P_0$. 
A flex $\{P_t\}_{t\in[0,1]}$ is called \textit{trivial} if $P_t$ is obtained from $P_0$ by means of
an isometry of $\mathbb{R}^3$, i.\,e., if for every $t\in (0,1]$ there is an isometry 
$A_t:\mathbb{R}^3\to\mathbb{R}^3$ such that $P_t=A_t\circ P_0$.
Otherwise, the flex is called \textit{nontrivial}. 
A polyhedron $P$ is called \textit{flexible} if there is nontrivial flex of $P$ and is called \textit{rigid} 
if every flex of $P$ is trivial. 
\end{defn}

\begin{rem}\label{rem3}
Definitions \ref{def2} and \ref{def3} concern only simplicial polytopes, but this does not reduce the generality of the 
results obtained in Section \ref{s3}.
After all, even if we would use some more general definition of a flexible polyhedron, which allows 
us to consider flexible polyhedra with non-triangular faces, we would additionally triangulate the faces 
of the flexible polyhedron and obtain a flexible polyhedron in the sense of Definition \ref{def3}.
Note also that, by analogy with Definitions \ref{def1}--\ref{def3}, the concept of a flexible polyhedron can be defined in 
any spatial form (i.\,e., in a connected complete Riemannian manifold of constant curvature) of dimension 3 
and above; see, e.\,g., \cite{Ku79}, \cite{St06}, \cite{Ga14c}, \cite{Ga15a}, \cite{Ga15b}, \cite{Ga17}.
\end{rem}

\begin{defn}\label{def4}
Let $K$ be a finite strongly connected oriented two-dimensional simplicial complex and
$P:K\to\mathbb{R}^3$ be an oriented polyhedron with nondegenerate faces. 
Suppose $\sigma\in K_1$, $\tau_1, \tau_2\in K_2$ and $\sigma=\tau_1\cap\tau_2$.
Let us denote by $\boldsymbol{n}(\tau_1)$ and $\boldsymbol{n}(\tau_2)$
the unit positively oriented (according to the orientation of $K$) normals to the faces
$P(\tau_1)$ è $P(\tau_2)$ respectively.
Consider first the case when $\boldsymbol{n}(\tau_1)+\boldsymbol{n}(\tau_2)\neq 0$.
Let $\boldsymbol{x}$ be an arbitrary point of the edge $P(\sigma)$, which is not a vertex of $P(K)$, 
and let $B(\boldsymbol{x})$ be a ball in $\mathbb{R}^3$ which is centered at $\boldsymbol{x}$ and
does not intersect any edge of the faces $P(\tau_1)$ and $P(\tau_2)$, except the edge $P(\sigma)$.
The set $P(\tau_1)\cup P(\tau_2)$ divides the ball $B(\boldsymbol{x})$ into two bodies (slices).
Let us denote by $B_+(\boldsymbol{x})$ the one whose interior contains the points 
$\boldsymbol{x}+\frac12\varepsilon(\boldsymbol{n}(\tau_1)+\boldsymbol{n}(\tau_2))$,
where a positive number $\varepsilon$ is less than the radius of the ball $B(\boldsymbol{x})$.
We call the number 
$\varphi_{\sigma}=2\pi\text{Vol\,}(B_+(\boldsymbol{x}))/\text{Vol\,}(B(\boldsymbol{x}))$,
the \textit{principal value of the dihedral angle} of the polyhedron $P(K)$ at the edge $P(\sigma)$.
Here $\text{Vol\,}(\cdot)$ stands for the 3-dimensional volume of the corresponding body.
If $\boldsymbol{n}(\tau_1)+\boldsymbol{n}(\tau_2)= 0$, we put by definition $\varphi_{\sigma}=0$.
We call the set $\Phi_{\sigma}=\{ \varphi_{\sigma}+ 2\pi m \ \vert \ m\in\mathbb{Z}\}$ the 
\textit{oriented dihedral angle} at the edge $P(\sigma)$ of the polyhedron $P(K)$.
Here $\varphi_{\sigma}$ stands for the principal value of the dihedral angle at the edge $P(\sigma)$.
\end{defn}

\begin{rem}\label{rem4}
It can be shown that $\varphi_{\sigma}$ is in the range from 0 to $2\pi$ and does not depend on the 
choice of the point $\boldsymbol{x}$ on the edge $P(\sigma)$.
It can also be shown that $\varphi_{\sigma}$ is not a continuous function on the set of all oriented 
polytopes of a given combinatorial type $K$ with non-degenerate faces.
The fact is that if $\varphi_{\sigma}=0$ at the edge $P(\sigma)$ of $P:K\to\mathbb {R}^3$, then 
in any neighborhood of $P$ there exist polyhedra of combinational type $K$ with non-degenerate faces, 
for which the principal value of the dihedral angle at the corresponding edge is close to $0$ 
and there are polyhedra for which it is close to $2\pi$.
On the other hand, it can be shown that the many-valued function $\Phi_{\sigma}$ is continuous on the 
set of all oriented polytopes of a given combinatorial type $K$ with non-degenerate faces.
Therefore, it is possible to find continuous selection functions for it, see \cite{Ku68}. 
In Definition \ref{def5}, we use the existence of a suitable continuous selection function.
\end{rem}

\begin{defn}\label{def5}
Let $K$ be a finite strongly connected oriented two-dimensional simplicial complex and
let $\sigma\in K_1$. 
Let $P:K\to\mathbb{R}^3$ be an oriented polyhedron with nondegenerate faces and let  
$\{P_t\}_{t\in[0,1]}$ be a flex of $P$. 
We call a continuous selection function $\widetilde{\varphi}_{\sigma}(t)$  of the multi-valued
mapping  $t\mapsto \Phi_{\sigma}(t)$ such that $\widetilde{\varphi}_{\sigma}(0)=\varphi_{\sigma}$,
where $\varphi_{\sigma}$ is a principal value of the dihedral angle of $P$ at the edge $P(\sigma)$, 
a \textit{dihedral angle} at the edge $P(\sigma)$ during the flex $\{P_t\}_{t\in[0,1]}$.
\end{defn}

The following lemma is a very special case of Statement 3.4, proved by A.~A.~Gaifullin and 
L.~S.~Ignashchenko in their article \cite{GI18}.

\begin{lemma}\label{l1}
Let $K$ be a finite strongly connected oriented two-dimensional simplicial complex, 
$P:K\to\mathbb{R}^3$ be an oriented polyhedron with nondegenerate faces, and 
$\{P_t\}_{t\in [0,1]}$ be a flex of $P$.
Then, for every $\mathbb{Q}$-linear function, $f:\mathbb{R}\to\mathbb{R}$, the expression
$\sum_{\sigma} f(\ell_{\sigma})\widetilde{\varphi}_{\sigma}(t)$ is independent of $t$, 
i.\,e. it remains constant during the flex.
Here, summation is carried out over all 1-dimensional simplices $\sigma\in K$, while 
$\ell_{\sigma}$ and $\widetilde{\varphi}_{\sigma}(t)$ denote the length of the edge $P(\sigma)$ and
the dihedral angle of the polyhedron $P$ at the edge $P(\sigma)$ during the flex $\{P_t\}_{t\in[0,1]}$,
respectively.
\hfill$\square$
\end{lemma}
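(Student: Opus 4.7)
The plan is to reduce the lemma to an infinitesimal identity and then exploit a Hamel basis of $\mathbb{R}$ over $\mathbb{Q}$. First I would observe that throughout a flex each face $P_t(\tau)$ is congruent to $P_0(\tau)$, so every edge length $\ell_\sigma$ is constant in $t$, and consequently so is $f(\ell_\sigma)$. Hence it suffices to prove that the derivative $\sum_\sigma f(\ell_\sigma)\dot{\widetilde{\varphi}}_\sigma(t)$ vanishes wherever the selections $\widetilde{\varphi}_\sigma$ are differentiable. Since the space of polyhedra of combinatorial type $K$ with prescribed side lengths is a real-algebraic variety, a flex may (after reparametrisation) be taken to be a real-analytic curve on it, so each $\widetilde{\varphi}_\sigma$ is real-analytic off a discrete set of $t$; the general claim will then follow by continuity and integration.

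Next I would fix a Hamel basis $\{e_\alpha\}_{\alpha\in A}$ of $\mathbb{R}$ viewed as a $\mathbb{Q}$-vector space and write $\ell_\sigma=\sum_\alpha q_{\sigma,\alpha}e_\alpha$ with $q_{\sigma,\alpha}\in\mathbb{Q}$, only finitely many of which are nonzero since $K_1$ is finite. A $\mathbb{Q}$-linear map $f:\mathbb{R}\to\mathbb{R}$ is determined by the arbitrary real numbers $c_\alpha:=f(e_\alpha)$, and
\[\sum_\sigma f(\ell_\sigma)\dot{\widetilde{\varphi}}_\sigma(t)=\sum_\alpha c_\alpha\Bigl(\sum_\sigma q_{\sigma,\alpha}\dot{\widetilde{\varphi}}_\sigma(t)\Bigr).\]
Because the $c_\alpha$ may be prescribed independently and arbitrarily, the lemma is equivalent to the statement that for every basis index $\alpha$ the rational combination $\sum_\sigma q_{\sigma,\alpha}\dot{\widetilde{\varphi}}_\sigma(t)$ vanishes; in tensor-theoretic language, the infinitesimal Dehn tensor $\sum_\sigma\ell_\sigma\otimes\dot{\widetilde{\varphi}}_\sigma(t)\in\mathbb{R}\otimes_{\mathbb{Q}}\mathbb{R}$ must equal $0$.

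The main obstacle is that the classical Schl\"afli identity for Euclidean polyhedra delivers only the single real relation $\sum_\sigma\ell_\sigma\dot{\widetilde{\varphi}}_\sigma(t)=0$ (the case $f=\mathrm{id}$), whereas one independent relation per Hamel-basis coordinate is required. My strategy for supplying the missing relations is to apply the Schl\"afli identity not to $P_t$ itself but to auxiliary polyhedra derived from $P_t$: for instance by subdividing edges at rationally chosen interior points, by forming formal rational rescalings and juxtapositions of $P_t$, or---following the approach of Gaifullin and Ignashchenko in Statement 3.4 of \cite{GI18}---by constructing an explicit chain-level scissors-congruence bordism between $P_t$ and $P_0$ and tracking how dihedral contributions transport across it. Each such auxiliary construction inherits a flex from $P_t$ and yields a new rational-coefficient Schl\"afli-type relation on the $\dot{\widetilde{\varphi}}_\sigma$. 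Assembling enough of these to isolate, for each $\alpha$, the specific combination $\sum_\sigma q_{\sigma,\alpha}\dot{\widetilde{\varphi}}_\sigma(t)$ is the combinatorial heart of the proof and the step where I would expect the bulk of the technical work to reside.
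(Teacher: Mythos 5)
The first thing to note is that the paper does not prove this lemma at all: it is stated with a closed box and explicitly imported as ``a very special case of Statement 3.4'' of Gaifullin--Ignashchenko \cite{GI18}. So there is no internal proof to compare against; the question is whether your sketch would stand on its own. Its first half is a correct and essentially standard reformulation: edge lengths are constant during a flex, so the claim reduces (modulo a regularity argument) to the vanishing of $\sum_\sigma \ell_\sigma\otimes\dot{\widetilde{\varphi}}_\sigma(t)$ in $\mathbb{R}\otimes_{\mathbb{Q}}\mathbb{R}$, i.e.\ to the infinitesimal conservation of the (strong) Dehn invariant, with the Hamel basis merely translating between the tensor statement and the ``for every $\mathbb{Q}$-linear $f$'' statement. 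But this reformulation is where the content of the lemma \emph{begins}; it is not a reduction to anything easier. You correctly observe that Schl\"afli supplies only the single relation corresponding to $f=\mathrm{id}$, and the entire theorem is the production of the remaining relations, one per Hamel coordinate. That step is left as ``the combinatorial heart of the proof,'' which means the proof is not given.

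Moreover, two of the three auxiliary constructions you propose for generating new relations demonstrably produce nothing new. Subdividing an edge of length $\ell_\sigma$ at a rational interior point replaces it by segments of lengths $q\ell_\sigma$ and $(1-q)\ell_\sigma$ carrying the \emph{same} dihedral angle, so the resulting Schl\"afli identity is $\sum_\sigma\bigl(q\ell_\sigma+(1-q)\ell_\sigma\bigr)\dot{\widetilde{\varphi}}_\sigma=\sum_\sigma\ell_\sigma\dot{\widetilde{\varphi}}_\sigma$, i.e.\ the relation you already had. A global rational rescaling multiplies that same relation by a constant. Neither isolates the coordinates $\sum_\sigma q_{\sigma,\alpha}\dot{\widetilde{\varphi}}_\sigma$. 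The third option --- ``following the approach of Gaifullin and Ignashchenko'' --- is simply an appeal to the theorem being proved; their actual argument rests on a quite different mechanism (analytic continuation of the rotation data of the flex and their scissors-congruence/bordism machinery), no part of which appears in your sketch. A smaller but real issue is the regularity step: an arbitrary continuous flex cannot be ``reparametrised'' into a real-analytic curve; the standard repair is that the configuration space of polyhedra with fixed combinatorics and edge lengths is a real algebraic variety whose connected components are joined by piecewise-analytic arcs, so a continuous quantity constant along analytic arcs is constant along the flex. As it stands, the proposal is a faithful statement of \emph{what} must be proved, not a proof of it.
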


We will use Lemma \ref{l1} in Section~\ref{s3}.

\section{The main results}\label{s3}

\begin{thrm}\label{thrm1}
Let $K$ be a finite strongly connected oriented two-dimensional simplicial complex, 
$P:K\to\mathbb{R}^3$ be an oriented polyhedron with nondegenerate triangular faces, and 
$\{P_t\}_{t\in [0,1]}$ be a flex of $P$.
Suppose that a 1-dimensional simplex $\sigma_{*} \in K$ is such that the length of the edge
$P(\sigma_{*})$ is not a $\mathbb{Q}$-linear combination of the lengths of the remaining edges of $P$.
Then the dihedral angle at the edge $P(\sigma_{*})$ during the flex $\{P_t\}_{t\in [0,1]}$ 
remains unaltered, i.\,e., $\widetilde{\varphi}_{\sigma_{*}}(t)=\textnormal{const}$. 
\end{thrm}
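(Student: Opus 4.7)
The plan is to apply Lemma \ref{l1} to a single carefully chosen $\mathbb{Q}$-linear function $f:\mathbb{R}\to\mathbb{R}$ that isolates the distinguished edge. Since the flex preserves each face up to congruence, all edge lengths $\ell_\sigma$ are independent of $t$; hence in the sum $\sum_\sigma f(\ell_\sigma)\widetilde{\varphi}_\sigma(t)$ only the angles vary with $t$. If I can arrange that $f(\ell_\sigma)=0$ for every $\sigma\neq\sigma_*$ while $f(\ell_{\sigma_*})\neq 0$, then Lemma \ref{l1} immediately forces $\widetilde{\varphi}_{\sigma_*}(t)$ to be constant.

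To construct such an $f$, I would proceed as follows. Write $\ell_*:=\ell_{\sigma_*}$ and let $W\subset\mathbb{R}$ be the $\mathbb{Q}$-linear span of $\{\ell_\sigma:\sigma\in K_1,\ \sigma\neq\sigma_*\}$, a finite-dimensional $\mathbb{Q}$-subspace of $\mathbb{R}$. The hypothesis of the theorem is exactly that $\ell_*\notin W$, so the sum $\mathbb{Q}\ell_*+W$ is an internal direct sum $\mathbb{Q}\ell_*\oplus W$ of $\mathbb{Q}$-vector spaces. On this direct sum I can define a $\mathbb{Q}$-linear map by $\ell_*\mapsto 1$ and $w\mapsto 0$ for every $w\in W$, and then extend it (by zero on a Hamel-basis complement of $\mathbb{Q}\ell_*\oplus W$ in $\mathbb{R}$, which exists by the Axiom of Choice) to a $\mathbb{Q}$-linear function $f:\mathbb{R}\to\mathbb{R}$. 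By construction $f(\ell_{\sigma_*})=1$ and $f(\ell_\sigma)=0$ for every other $\sigma\in K_1$.

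Feeding this $f$ into Lemma \ref{l1}, the conserved quantity $\sum_{\sigma\in K_1} f(\ell_\sigma)\widetilde{\varphi}_\sigma(t)$ collapses to $\widetilde{\varphi}_{\sigma_*}(t)$, which is therefore independent of $t$. One small but necessary observation is that the hypothesis automatically rules out any other edge of $P$ having length exactly $\ell_*$, for otherwise $\ell_*=1\cdot\ell_\sigma$ would itself be a $\mathbb{Q}$-linear combination of the remaining edge lengths; this is precisely what ensures the definition of $f$ on $\mathbb{Q}\ell_*\oplus W$ is unambiguous. I do not anticipate any real obstacle: the theorem is essentially a one-line corollary of Lemma \ref{l1}, with the only non-elementary ingredient being the use of the Axiom of Choice to extend a $\mathbb{Q}$-linear functional from a finite-dimensional $\mathbb{Q}$-subspace of $\mathbb{R}$ to all of $\mathbb{R}$.
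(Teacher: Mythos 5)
Your proposal is correct and follows essentially the same route as the paper: both construct a $\mathbb{Q}$-linear $f$ with $f(\ell_{\sigma_*})=1$ and $f(\ell_\sigma)=0$ for the remaining edges (your $W$ is the paper's $\Lambda_*$, and your direct-sum extension is the paper's Hamel-basis construction), then feed it into Lemma \ref{l1}. No gaps.
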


\begin{proof}
As usual, denote by $K_1$ the set of all 1-dimensional simplices of $K$ and put
$K_1^*=K_1\diagdown\{\sigma_*\}$.
Let us denote by $\Lambda_*$ the $\mathbb{Q}$-linear span of the set 
$\{\ell_{\sigma}\}_{\sigma\in K_1^*}\subset\mathbb{R}^3$,
where $\ell_{\sigma}$ stands for the length of the edge $P(\sigma)$ of $P$.
Let $\{\lambda_1,\dots,\lambda_m\}$ denote an arbitrary $\mathbb{Q}$-basis in $\Lambda_*$.

According to the conditions of Theorem \ref{thrm1}, the set
$\{\ell_{\sigma_*},\lambda_1,\dots,\lambda_m\}$ is $\mathbb{Q}$-linearly independent.
Viewing $\mathbb{R}$ as a vector space over the field $\mathbb{Q}$, expand  
$\{\ell_{\sigma_*},\lambda_1,\dots,\lambda_m\}$ 
to a Hamel basis in this space. 
Define a $\mathbb{Q}$-linear function $f_{\sigma_{*}}:\mathbb{R}\to\mathbb{R}$
by putting for every element $\lambda$ of the above constructed Hamel base in $\mathbb{R}$
\begin{equation*}
f_{\sigma_{*}}(\lambda)=
\begin{cases}
1, \mbox{ if } \lambda= \ell_{\sigma_{*}};\\
0, \mbox{ if } \lambda\neq \ell_{\sigma_{*}}.
\end{cases}
\end{equation*}

Since $\{\lambda_1,\dots,\lambda_m\}$ is a basis in $\Lambda_*$, for every $\sigma\in K_1^*$,
there are rational numbers $\alpha_{\sigma j}$, $j=1,\dots, m$ such that 
$\ell_{\sigma}=\sum_{j=1}^m\alpha_{\sigma j}\lambda_j$.
Therefore, from the definition of the function $f_{\sigma_*}$, we obtain
for every $\sigma\in K_1^*$
\begin{equation*}
f_{\sigma_*}(\ell_{\sigma})=
f_{\sigma_*}\biggl(\sum_{j=1}^m\alpha_{\sigma j}\lambda_j\biggr)=
\sum_{j=1}^m\alpha_{\sigma j}f_{\sigma_*}(\lambda_j)=0.
\end{equation*}
Hence,
\begin{equation*}
\sum_{\sigma\in K_1}f_{\sigma_*}(\ell_{\sigma})
\widetilde{\varphi}_{\sigma}(t)=
f_{\sigma_*}(\ell_{\sigma_*})\widetilde{\varphi}_{\sigma_*}(t)+
\sum_{\sigma\in K_1^*}f_{\sigma_*}(\ell_{\sigma})
\widetilde{\varphi}_{\sigma}(t)=
\widetilde{\varphi}_{\sigma_*}(t).
\end{equation*}
However, according to Lemma \ref{l1}, the left-hand side of the last equality is independent of $t$. 
So, its right-hand side does not depend on $t$ too, i.\,e., 
$\widetilde{\varphi}_{\sigma_*}(t)=\text{const}$. 
\end{proof}

\begin{thrm}\label{thrm2}
Let $K$ be a finite strongly connected oriented two-dimensional simplicial complex and
$P:K\to\mathbb{R}^3$ be an oriented polyhedron with nondegenerate triangular faces.
Suppose that the set of the lengths of the edges of $P$ is $\mathbb{Q}$-linearly independent.
Then $P$ is rigid.
\end{thrm}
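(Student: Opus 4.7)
The plan is to reduce Theorem \ref{thrm2} to Theorem \ref{thrm1} and then argue that a flex that preserves all dihedral angles must be trivial.

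First I would observe that the hypothesis of Theorem \ref{thrm2} — that the entire set $\{\ell_{\sigma}\}_{\sigma\in K_1}$ is $\mathbb{Q}$-linearly independent — implies that for \emph{every} $\sigma_{*}\in K_{1}$, the length $\ell_{\sigma_{*}}$ fails to be a $\mathbb{Q}$-linear combination of $\{\ell_{\sigma}\}_{\sigma\in K_{1}\setminus\{\sigma_{*}\}}$; indeed, any such expression would immediately yield a nontrivial $\mathbb{Q}$-linear dependence among the full set. Given an arbitrary flex $\{P_{t}\}_{t\in[0,1]}$ of $P$, Theorem \ref{thrm1} therefore applies to each edge separately, yielding that $\widetilde{\varphi}_{\sigma}(t)\equiv\textrm{const}$ for every $\sigma\in K_{1}$.

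Next I would combine this with the defining property of a flex: for every $\sigma\in K_{2}$, the face $P_{t}(\sigma)$ is congruent to $P_{0}(\sigma)$. So along the flex all edge lengths, all face shapes, and all dihedral angles are frozen. The claim I would then establish is that these data determine $P_{t}$ uniquely up to an isometry of $\mathbb{R}^{3}$, and hence the flex is trivial. The argument propagates through the strongly connected complex $K$: pick any face $\tau_{1}\in K_{2}$ and compose $P_{t}$ with an isometry $A_{t}$ of $\mathbb{R}^{3}$ so that $A_{t}(P_{t}(\tau_{1}))=P_{0}(\tau_{1})$ as oriented triangles (possible because $P_{t}(\tau_{1})$ is congruent to $P_{0}(\tau_{1})$ and both faces carry consistent orientations). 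For any face $\tau_{2}$ sharing an edge $\sigma=\tau_{1}\cap\tau_{2}$, the position of $A_{t}(P_{t}(\tau_{2}))$ in $\mathbb{R}^{3}$ is determined by the (fixed) shape of the triangle $P_{0}(\tau_{2})$, the (fixed) length $\ell_{\sigma}$, the (fixed) edge $P_{0}(\sigma)$, and the (fixed) dihedral angle $\widetilde{\varphi}_{\sigma}(t)$; so $A_{t}(P_{t}(\tau_{2}))=P_{0}(\tau_{2})$. Using condition (v) of Definition \ref{def1}, a finite induction along a chain of adjacent faces extends this identification to all of $K_{2}$, and thus to all vertices, giving $A_{t}\circ P_{t}=P_{0}$. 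Hence the flex is trivial and $P$ is rigid.

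The main obstacle I anticipate is justifying the last propagation step rigorously, in particular ensuring that the isometry $A_{t}$ can be chosen so as to align the full oriented triangle $P_{t}(\tau_{1})$ with $P_{0}(\tau_{1})$ — which requires not only that corresponding edge lengths agree (given) but also that the orientations of $\tau_{1}$ as an element of the oriented complex $K$ are respected by $P_{t}$ (which holds because $P_{t}$ is by assumption an oriented polyhedron throughout the flex). Once the orientation bookkeeping is in place, the rest is a direct induction and uses only elementary Euclidean geometry, with the dihedral angle $\widetilde{\varphi}_{\sigma}(t)$ providing exactly the one degree of freedom needed to fix $P_{t}(\tau_{2})$ relative to $P_{t}(\tau_{1})$ across the shared edge $P_{t}(\sigma)$.
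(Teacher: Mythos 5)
Your proposal is correct and follows essentially the same route as the paper: reduce to Theorem \ref{thrm1} edge by edge to conclude that all dihedral angles are constant, then match one face by an isometry $A_t$ and propagate the identity $P_t=A_t\circ P_0$ across shared edges using strong connectivity. Your discussion of the orientation bookkeeping in the propagation step is, if anything, slightly more careful than the paper's own treatment.
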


\begin{proof}
Let $\{P_t\}_{t\in [0,1]}$ be a flex of the polyhedron $P=P_0:K\to\mathbb{R}^3$.  
Since, for every $\sigma\in K_1$, the length of the edge $P(\sigma)$ of $P$ cannot be represented as
a $\mathbb{Q}$-linear combination of the lengths of remaining edges of $P$ then, by Theorem \ref{thrm1},
the dihedral angle at the edge $P(\sigma)$ during the flex $\{P_t\}_{t\in [0,1]}$ remains constant,
i.\,e., $\widetilde{\varphi}_{\sigma}(t)=\text{const}$.

Let us fix $t\in (0,1]$ and a 2-dimensional simplex $\tau\in K$.
Since $\{P_t\}_{t\in [0,1]}$ is a flex, there is an isometry $A_t:\mathbb{R}^3\to\mathbb{R}^3$,
which matches the faces $P_0(\tau)\subset P_0(K)$ and $P_t(\tau)\subset P_t(K)$ 
together with their orientation, i.\,e., such that $P_t\vert_{\tau}=(A_t\circ P_0)\vert_{\tau}$.
Since, for every $\sigma\in K_1$ (including the case $\sigma\subset\tau$), we have 
$\widetilde{\varphi}_{\sigma}(t)= \widetilde{\varphi}_{\sigma}(0)$, the equality
$P_t=A_t\circ P_0$ holds true on the union of the simplex $\tau$ and all three  2-dimensional 
simplices of $K$, adjacent to $\tau$.
Continuing in the same way and using the strong connectivity of the complex $K$, we make sure 
that the equality $P_t=A_t\circ P_0$ holds true on $K$.
Hence, the flex $\{P_t\}_{t\in [0,1]}$ is trivial.
Thus, we see that the polyhedron $P=P_0:K\to\mathbb{R}^3$ allows only trivial flexes,
i.\,e., it is rigid. 
\end{proof}

\begin{rem}\label{rem5}
For the first time, Theorem \ref{thrm2} was published without proof in \cite{GI18} as Corollary 1.3.
\end{rem}

\begin{rem}\label{rem6}
It would be very interesting to determine to what extent Theorem \ref{thrm2} holds for 
non-orientable polyhedra.
The above arguments are not suitable for this, since they rely on Lemma \ref{l1}, which is 
applicable to oriented polyhedra only.
\end{rem}

\begin{rem}\label{rem7}
Theorem \ref{thrm2} gives a sufficient condition for a polyhedron to be rigid, which is fundamentally 
different from the two classical sufficient conditions for a polyhedron to be rigid mentioned in 
Section \ref{s1} (we mean the convexity and the absence of nontrivial infinitesimal deformations), 
and from a sufficient condition for rigidity of polyhedra found by I.Kh. Sabitov in \cite {Sa02}.
Recall that, in \cite[Theorem 2]{Sa02}, for each small diagonal of a polyhedron $P:K\to\mathbb{R}^3$, 
a polynomial $q(x)$ in one real variable $x$ is constructed such that

$\bullet$ the coefficients of $q(x)$ are real numbers and depend only on $K$ and the edge lengths of $P$;

$\bullet$  the length of this small diagonal is a root of a $q(x)$.

\noindent{Obviously}, if $P$ is flexible then at least one of its small diagonals is nonconstant
and, therefore, takes infinitely many values. Hence, all the coefficients of the corresponding polynomial 
$q(x)$ are zero.
This consideration led I.~Kh.~Sabitov to the following sufficient condition for a polyhedron to be rigid: 
\textit{If for each small diagonal at least one coefficient of the corresponding polynomial $q(x)$ 
is not equal to zero then the polyhedron $P$ is rigid}, see \cite[Corollary 2]{Sa02}. 
However, the application of Sabitov's condition is complicated by the fact that the problem of finding 
the polynomial $q(x)$ in itself is very difficult.
\end{rem}

\begin{rem}\label{rem8}
Theorem \ref{thrm2} in a new way expresses the well-known fact that ``almost all closed polytopes 
are inflexible.'' The novelty is that the condition interpreted as  ``almost all'' is imposed 
on the edge lengths, while in previously known theorems of this type a similar condition was imposed 
on the coordinates of the vertices. 
To be more precise, we recall that there is a natural one-to-one mapping from 
the set of all polyhedra $P:K\to\mathbb{R}^3$ of a given combinatorial type $K$ 
onto $\mathbb{R}^{3v}$, where $v$ is the number of 0-dimensional simplices of $K$.
This mapping maps $P$ to the vector in $\mathbb{R}^{3v}$ such that the set of its coordinates is
the union of the sets of coordinates of all vertices of $P$ ordered according to a preassumed 
order on the set of 0-dimensional simplices of $K$.
The following properties of the set $\mathscr{F}$ consisting of the points of $\mathbb{R}^{3v}$ 
corresponding to flexible polyhedra $P:K\to\mathbb{R}^3$ are known:

(A)  $\mathscr{F}$ is contained in the complement to an open everywhere dense subset of $\mathbb{R}^{3v}$, 
see, e.\,g., \cite{AR79}, \cite{Sa02}, \cite{Gl75};

(B)  $\mathscr{F}$ is contained in the complement to a set which consists of the points
$\boldsymbol{x}\in\mathbb{R}^{3v}$ whose coordinates are algebraically independent over $\mathbb{Q}$
(this means that, given $\boldsymbol{x}$, there is no nonzero polyhedron with rational coefficients
for which $\boldsymbol{x}$ is a root), see, e.\,g.,  \cite{Fo87}, \cite{FW13}. 

\noindent{For} completeness, we note that properties (A) and (B) are also valid in more general situations, 
namely, for various and very wide classes of frameworks in $\mathbb{R}^n$, $n\geqslant 2$, 
which, for $n\geqslant 3$, include 1-skeletons of $(n-1)$-dimensional simplicial polyhedra, 
see, e.\,g., \cite{NS16}, \cite{CG17} and literature mentioned there.
\end{rem}

The following theorem is a generalization of Theorem \ref{thrm1}:

\begin{thrm}\label{thrm3}
Let $K$ be a finite strongly connected oriented two-dimensional simplicial complex,
$K_1$ be the set of all 1-dimensional simplices of $K$,
$P:K\to\mathbb{R}^3$ be an oriented polyhedron with nondegenerate triangular faces,
$L=\cup_{\sigma\in K_1}\{\ell_{\sigma}\}$ be the set of the lengths $\ell_{\sigma}$ 
of all edges $P(\sigma)$ of $P$, and let  $\{\lambda_1,\dots,\lambda_m\}$  be a
$\mathbb{Q}$-basis in the $\mathbb{Q}$-linear span of $L$, i.\,e., the reals
$\lambda_1,\dots,\lambda_m$ are $\mathbb{Q}$-linearly independent and, for every $\sigma\in K_1$,
there are $\alpha_{\sigma j}\in\mathbb{Q}$ such that
\begin{equation*}
\ell_{\sigma}=\sum_{j=1}^m \alpha_{\sigma j}\lambda_j.
\end{equation*}
Then, for every flex $\{ P_t\}_{t\in[0,1]}$ of $P$ and every $j=1,\dots,m$, the expression  
\begin{equation}\label{eq1}
\sum_{\sigma\in K_1} \alpha_{\sigma j}\widetilde{\varphi}_{\sigma}(t)
\end{equation}
is independent of $t$.
In \textnormal{(\ref{eq1})}, the summation is taken over all 1-dimensional simplices $\sigma$ of $K$
and $\widetilde{\varphi}_{\sigma}(t)$ stands for the dihedral angle at the edge  $P(\sigma)$ during
the flex $\{P_t\}_{t\in [0,1]}$.
\end{thrm}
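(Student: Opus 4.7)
The plan is to mimic the construction in the proof of Theorem \ref{thrm1}, but to produce, for each index $j$, a $\mathbb{Q}$-linear function $f_j:\mathbb{R}\to\mathbb{R}$ that reads off the $j$-th coefficient of an edge length in the basis $\{\lambda_1,\dots,\lambda_m\}$. Once such an $f_j$ is in hand, Lemma \ref{l1} applied to $f_j$ yields the required conservation law, because by $\mathbb{Q}$-linearity $f_j(\ell_\sigma)=\alpha_{\sigma j}$ for every $\sigma\in K_1$.

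Concretely, I would first invoke the hypothesis that $\lambda_1,\dots,\lambda_m$ are $\mathbb{Q}$-linearly independent to extend them to a Hamel basis of $\mathbb{R}$ over $\mathbb{Q}$. Fix $j\in\{1,\dots,m\}$, and define $f_j$ on this Hamel basis by setting $f_j(\lambda_j)=1$ and $f_j(\lambda)=0$ for every other basis element $\lambda$; extend $f_j$ to all of $\mathbb{R}$ by $\mathbb{Q}$-linearity. Then, for every $\sigma\in K_1$,
\begin{equation*}
f_j(\ell_{\sigma})=f_j\biggl(\sum_{k=1}^{m}\alpha_{\sigma k}\lambda_k\biggr)=\sum_{k=1}^{m}\alpha_{\sigma k}f_j(\lambda_k)=\alpha_{\sigma j}.
\end{equation*}

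With this identity the argument finishes immediately: Lemma \ref{l1} applied to $f_j$ asserts that
\begin{equation*}
\sum_{\sigma\in K_1}f_j(\ell_{\sigma})\widetilde{\varphi}_{\sigma}(t)=\sum_{\sigma\in K_1}\alpha_{\sigma j}\widetilde{\varphi}_{\sigma}(t)
\end{equation*}
is independent of $t$, which is exactly the claim about the expression (\ref{eq1}). Since $j$ was arbitrary, the conclusion follows for all $j=1,\dots,m$.

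I do not expect a real obstacle here; the theorem is essentially a repackaging of Theorem \ref{thrm1}, in which the single distinguished edge $\sigma_*$ is replaced by the systematic extraction of each coordinate in the chosen $\mathbb{Q}$-basis of the span of the edge lengths. The only point that requires any care is the legitimacy of the Hamel-basis extension, which is guaranteed by the $\mathbb{Q}$-linear independence of $\lambda_1,\dots,\lambda_m$ assumed in the statement. Note in particular that Theorem \ref{thrm1} is recovered as the special case in which one chooses a basis containing $\ell_{\sigma_*}$ as one of its elements and takes $j$ to be the corresponding index.
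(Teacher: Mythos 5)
Your proposal is correct and coincides with the paper's own argument: both extend $\{\lambda_1,\dots,\lambda_m\}$ to a Hamel basis of $\mathbb{R}$ over $\mathbb{Q}$, define $f_j$ to be $1$ on $\lambda_j$ and $0$ on the other basis elements, observe that $f_j(\ell_\sigma)=\alpha_{\sigma j}$, and conclude via Lemma \ref{l1}. No differences worth noting.
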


\begin{proof}
Treating $\mathbb{R}$ as a vector space over the field $\mathbb{Q}$, expand the set 
$\{\lambda_1,\dots,\lambda_m\}$ to a Hamel basis in this space. 
For every $j=1,\dots, m$, define a $\mathbb{Q}$-linear function $f_j:\mathbb{R}\to\mathbb{R}$ 
by putting for an element $\lambda$ of the above constructed Hamel base in $\mathbb{R}$
\begin{equation*}
f_j(\lambda)=
\begin{cases}
1, \mbox{ if } \lambda= \lambda_j,\\
0, \mbox{ if } \lambda\neq \lambda j.
\end{cases}
\end{equation*}
Then, using (\ref{eq1}) and the definition of $f_j$, we get
\begin{multline*}
\sum_{\sigma\in K_1} f_j(\ell_{\sigma})\widetilde{\varphi}_{\sigma}(t)=
\sum_{\sigma\in K_1} f_j\biggl(\sum_{k=1}^m \alpha_{\sigma k}\lambda_k\biggr)
     \widetilde{\varphi}_{\sigma}(t)=\\
=\sum_{\sigma\in K_1}\sum_{k=1}^m \alpha_{\sigma k} f_j(\lambda_k)
     \widetilde{\varphi}_{\sigma}(t)=
\sum_{\sigma\in K_1} \alpha_{\sigma j}\widetilde{\varphi}_{\sigma}(t).
\end{multline*}
However, according to Lemma \ref{l1}, the left-hand side of the last equality does not depend on $t$. 
Hence, its right-hand side does not depend on $t$. 
\end{proof}

\begin{rem}\label{rem9}
Reducing rational numbers $\alpha_{\sigma j}$ in (\ref{eq1}) to a common denominator,
we find that some linear combination of its dihedral angles remains constant during the flex of a polyhedron.
Moreover, the coefficients of this linear combination do not change during the flex, are integers, 
and not all are equal to zero.
Thus, Theorem \ref{thrm3} clarifies the origin of linear relations with integer coefficients between 
dihedral angles of Bricard octahedra, found in \cite{Al10} 
(to be more precise, in \cite{Al10}, linear relations for dihedral angles of Bricard  octahedra of 
types I, II, and III were obtained and used in the proofs of Theorems 2, 4, and 6, respectively).
\end{rem}

\section{Acknowledgement}\label{s4}
The author is grateful to Alexander A. Gaifullin and Idzhad Kh. Sabitov 
for their comments on a preliminary version of this article.

\end{document}